\documentclass[12pt, reqno]{amsart}
\usepackage{amsmath, amsthm, amscd, amsfonts, amssymb, graphicx, color}
\usepackage[bookmarksnumbered, colorlinks, plainpages]{hyperref}

\textheight 22.5truecm \textwidth 14.5truecm
\setlength{\oddsidemargin}{0.35in}\setlength{\evensidemargin}{0.35in}

\setlength{\topmargin}{-.5cm}

\newtheorem{theorem}{Theorem}[section]
\newtheorem{lemma}[theorem]{Lemma}

\theoremstyle{definition}
\newtheorem{definition}[theorem]{Definition}
\newtheorem{example}[theorem]{Example}

\theoremstyle{remark}
\newtheorem{remark}[theorem]{Remark}
\numberwithin{equation}{section}


\def\CC{\mathbb{C}}
\def\DD{\mathfrak{D}}
\def\CX{C(X)}
\def\HCX{\mathcal{H}\bigl(\CX\bigr)}
\def\series{\sum_{n=0}^{\infty}}
\newcommand{\norm}[1]{\left\|#1\right\|}

\begin{document}
\setcounter{page}{1}


\title[Solubility of transcendental equations]{On the solubility of transcendental equations in commutative $C^*$-algebras}

\author[M. Garc\' ia Armas, C. S\' anchez Fern\' andez]{Mario Garc\' ia Armas$^1$ and Carlos S\' anchez Fern\' andez$^2$}

\address{$^{1}$ Facultad de Matem\' atica y Computaci\' on, Universidad de La Habana, Havana, Cuba.}

\email{\textcolor[rgb]{0.00,0.00,0.84}{marioga@matcom.uh.cu}}

\address{$^{2}$ Facultad de Matem\' atica y Computaci\' on, Universidad de La Habana, Havana, Cuba}
\email{\textcolor[rgb]{0.00,0.00,0.84}{csanchez@matcom.uh.cu}}

\dedicatory{}

\subjclass[2000]{Primary 46J10; Secondary 46T25.}

\keywords{Banach algebras of continuous functions, transcendental equations, entire functions.}


\begin{abstract}
It is known that $\CX$ is algebraically closed if $X$ is a locally connected, hereditarily unicoherent compact Hausdorff space. For such spaces, we prove that if $F:\CX \to \CX$ is an entire function in the sense of Lorch, i.e., is given by an everywhere convergent power series with coefficients in $\CX$, and satisfies certain restrictions, then it has a root in $\CX$. Our results generalizes the monic algebraic case. 
\end{abstract} \maketitle

\section{Introduction}

Let $X$ be a compact Hausdorff space and let $\CX$ be the Banach algebra of complex-valued continuous functions on $X$. We say that $F : \CX \to \CX$ is \emph{entire} (in the sense of Lorch) if it is Fr\'{e}chet differentiable at every point $w \in \CX$ and its differential is given by a multiplication operator $L_w(h) = F'(w) h$, for some $F'(w) \in \CX$ (see \cite{Lo1943} for details). We denote the set of entire functions by $\HCX$ and make it into a unital algebra with the usual operations. It is well known that $F \in \HCX$ if and only if it admits a power series expansion
\begin{equation}
F(w) = \series a_n w^n, \qquad w \in \CX,
\end{equation}
where $a_n \in \CX$ for all $n \geq 0$, $\limsup_n \norm{a_n}^{1/n} = 0$ and the series converges in norm for each fixed $w\in\CX$. 

To any entire function $F$, we may associate the map $X \times \CC \to \CC$ defined by
\begin{equation} \label{Associate_Function}
(x,z) \mapsto \series a_n(x) z^n\ \Bigl(= F\left(z 1_{\CX}\right)(x)\Bigr),
\end{equation}
which is easily seen to be continuous on $X \times \CC$ and holomorphic with respect to $z$ for $x \in X$ fixed. On the other hand, it is obvious that the above map uniquely determines $F$. By a customary abuse of notation, we also write $F$ for the map in \eqref{Associate_Function}; it should be clear from the context which case we are referring to. 

We say that $F \in \HCX$ has a \emph{root} in $\CX$, if there exists $w \in \CX$ such that $F(x,w(x)) = 0$ for all $x \in X$. If $X$ is a locally connected compact Hausdorff space, it was observed by Miura and Niijima \cite{MN2003} that $\CX$ is algebraically closed, i.e., every monic polynomial with coefficients in the algebra has at least one root in the algebra, if and only if $X$ is hereditarily unicoherent (see also Honma and Miura \cite{HM2007}). We recall that $X$ is said to be hereditarily unicoherent, if the intersection $A \cap B$ is connected for all closed connected subsets $A, B$ of $X$. A short, but accurate introduction to the state of the art in monic algebraic equations can be found in Kawamura and Miura \cite{KM2009}.

However, if we consider more general functions in $\HCX$, the existence of continuous roots is no longer guaranteed, even if $X$ is as simple as the unit interval. For example, the function $F(x,z) = x^2 z - x$ does not have a root in $C([0,1])$. We now introduce two phenomena that arise in the preceding example and have a strong relation with the existence of solutions of the equation $F(w)=0$.

\begin{definition} \label{Def_Degeneracy}
Let $X$ be a compact Hausdorff space. A function $F \in \HCX$ is said to be \emph{degenerate} at $x_0 \in X$ if the map $z \mapsto F(x_0,z)$ is constant; otherwise, it is said to be \emph{nondegenerate} at $x_0$.
\end{definition}

\begin{definition} \label{Def_AsymptoticZero}
Let $X$ be a compact Hausdorff space, let $Y \subset X$ be a connected subset and $x_0 \in \overline{Y}\setminus Y$. A function $w \in C(Y)$ is said to be an \emph{asymptotic root} of $F \in \HCX$ if $F(x,w(x)) = 0$ for all $x \in Y$ and
\begin{equation}
\lim_{x \to x_0} w(x) = \infty, \quad x \in Y.
\end{equation}
\end{definition}

The aim of this paper is to prove that if $X$ is a connected, locally connected, hereditarily unicoherent compact Hausdorff space, then any nowhere degenerate function $F \in \HCX$ with no asymptotic roots, satisfying $F(x_0,z_0) = 0$, has at least one root $w \in \CX$ such that $w(x_0) = z_0$. It is easily seen that monic polynomials are nondegenerate at every point of $X$ and do not have asymptotic roots. Consequently, our result generalizes that of Miura and Niijima \cite{MN2003}.

It is important to mention that Gorin and S\'{a}nchez Fern\'{a}ndez \cite{GS1977} studied the case where $X$ is a connected, locally connected, hereditarily unicoherent, compact metric space and showed that any nowhere degenerate function $F \in \HCX$ with no asymptotic arcs, satisfying the condition $F(x_0,z_0)=0$, has at least one root $w\in C(X)$ such that $w(x_0)=z_0$ (for a definition of asymptotic arc, see \cite{GS1977}). In our work, we do not assume that $X$ is a first-countable space. 

\section{Existence of Roots}

We start by pointing out a very useful lemma, which arises naturally from Rouch\'{e}'s Theorem.

\begin{lemma} \label{Lemma:Algebrization}
Let $X$ be a compact Hausdorff space, $F \in \HCX$ and pick $x_0 \in X$ such that the map $z \mapsto F(x_0,z)$ has a zero $z_0$ of multiplicity $n$. Then, there exist an open disk $D_r(z_0)$ and a neighborhood $V$ of $x_0$ such that
\begin{equation}
F(x,z) = P(x,z)\:G(x,z), \qquad (x,z) \in V \times D_r(z_0),
\end{equation}
where $P(x,z) = z^n + a_1(x)z^{n-1}+\ldots+a_n(x)$ is a monic polynomial with coefficients in $C(V)$ satisfying $P(x_0,z) = (z-z_0)^n$ and $G$ never vanishes in $V \times D_r(z_0)$.
\end{lemma}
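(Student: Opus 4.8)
The plan is to prove a parametrized version of the Weierstrass preparation theorem, using Rouch\'{e}'s Theorem together with the joint continuity of the associated map $F$ on $X\times\CC$. Recall that this joint continuity is immediate: the series $\series a_n(\cdot)\,z^n$ converges uniformly on $X\times\overline{D}_R(0)$ for every $R>0$ (since $\norm{a_n}^{1/n}R\to 0$), so both $F$ and $\partial_z F(x,z)=\sum_{n=1}^{\infty} n\,a_n(\cdot)\,z^{n-1}$ are locally uniform limits of polynomials in $z$ with continuous coefficients, hence jointly continuous on $X\times\CC$. Because $z\mapsto F(x_0,z)$ has a zero of \emph{finite} multiplicity $n$ at $z_0$, it is not identically zero, so its zeros are isolated; I choose $r>0$ with $F(x_0,z)\neq 0$ for $0<|z-z_0|\le r$ and set $\delta:=\min_{|z-z_0|=r}|F(x_0,z)|>0$. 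Joint continuity and compactness of the circle then give a neighborhood $V$ of $x_0$ with $|F(x,z)-F(x_0,z)|<\delta\le|F(x_0,z)|$ for all $x\in V$ and all $z$ with $|z-z_0|=r$.

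By Rouch\'{e}'s Theorem, for each fixed $x\in V$ the function $z\mapsto F(x,z)$ has exactly $n$ zeros $z_1(x),\dots,z_n(x)$ in $D_r(z_0)$, counted with multiplicity, and no zeros on $\{|z-z_0|=r\}$; at $x_0$ all of these equal $z_0$. I set $P(x,z):=\prod_{j=1}^{n}\bigl(z-z_j(x)\bigr)=z^n+a_1(x)z^{n-1}+\dots+a_n(x)$, so that $P(x_0,z)=(z-z_0)^n$ automatically. To see $a_k\in C(V)$, I use that the power sums of the roots are contour integrals,
\[
p_m(x)=\sum_{j=1}^{n}z_j(x)^m=\frac{1}{2\pi i}\oint_{|z-z_0|=r}z^m\,\frac{\partial_z F(x,z)}{F(x,z)}\,dz,\qquad m\ge 1,
\]
whose integrands are jointly continuous on $V\times\{|z-z_0|=r\}$ because $F$ does not vanish there; hence $p_m\in C(V)$, and by Newton's identities each $a_k$ is a polynomial in $p_1,\dots,p_n$, so $a_k\in C(V)$.

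Finally I put $G(x,z):=F(x,z)/P(x,z)$ on $V\times D_r(z_0)$. For fixed $x\in V$, the zeros of $P(x,\cdot)$ in $D_r(z_0)$ coincide, with the same multiplicities, with those of $F(x,\cdot)$, so $z\mapsto G(x,z)$ extends holomorphically and without zeros to all of $D_r(z_0)$. For joint continuity, fix $0<r'<r$; since $P(x,\zeta)\neq 0$ for $x\in V$ and $|\zeta-z_0|=r$ (all roots $z_j(x)$ lie strictly inside), the Cauchy representation
\[
G(x,z)=\frac{1}{2\pi i}\oint_{|\zeta-z_0|=r}\frac{F(x,\zeta)}{P(x,\zeta)\,(\zeta-z)}\,d\zeta,\qquad (x,z)\in V\times D_{r'}(z_0),
\]
exhibits $G$ as the integral of a jointly continuous integrand, so $G\in C\bigl(V\times D_{r'}(z_0)\bigr)$; letting $r'\uparrow r$ gives $G\in C\bigl(V\times D_r(z_0)\bigr)$. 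Since $F=PG$ on $V\times D_r(z_0)$ by construction, this proves the lemma.

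The main obstacle is the continuous dependence on the parameter $x$: the individual roots $z_j(x)$ admit no continuous selection in general (branch points may occur), so $P$ cannot be assembled from them pointwise. Passing instead through the elementary symmetric functions via the contour-integral formulas for the power sums circumvents this difficulty, and the same circle-integral device — exploiting that $F(x,\cdot)$ stays zero-free on the bounding circle for $x\in V$ — delivers the continuity of the quotient $G$.
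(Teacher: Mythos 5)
Your proof is correct and follows essentially the same route as the paper: Rouch\'e's theorem on a circle where $F(x_0,\cdot)$ is zero-free, contour-integral formulas for the power sums combined with Newton's identities to obtain continuity of the coefficients of $P$, and then $G = F/P$. The only difference is that you additionally establish joint continuity of $G$ via a Cauchy integral representation, which is more than the lemma requires (the paper simply sets $G=1$ at the common zeros of $F$ and $P$ and only needs the factorization and non-vanishing).
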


\begin{proof}
Set $r>0$ such that the map $z \mapsto F(x_0,z)$ has no roots in $\overline{D_r(z_0)}\setminus \{z_0\}$ and write $\Gamma = \{z \in \CC\: :\:|z-z_0|=r\}$. Also, write $m = \min_{\Gamma} |F(x_0,z)| > 0$. By a standard compactness argument, we can find a neighborhood $V$ of $x_0$ such that $|F(x,z) - F(x_0,z)|<m$ for all $x \in V$ and $z \in \Gamma$. Then, an application of Rouch\'{e}'s Theorem shows that $z \mapsto F(x,z)$ has exactly $n$ zeros in $D_r(z_0)$, counting multiplicities, whenever $x \in V$.

For any $x \in V$, we denote the zeros of $z \mapsto F(x,z)$ in $D_r(z_0)$ by $z_1(x), \ldots, z_n(x)$, taken in any order and we define
\begin{equation}
P(x,z) = \bigl(z-z_1(x)\bigr)\ldots\bigl(z-z_n(x)\bigr) = z^n + a_1(x)z^{n-1}+\ldots+a_n(x).
\end{equation}
Obviously, we have $P(x_0,z) = (z-z_0)^n$. Now, consider the central symmetric functions
\begin{equation}
s_k(x) = \sum_{i=1}^{n} \bigl(z_i(x)\bigr)^k, \qquad k \geq 0.
\end{equation}
Since $z_1(x), \ldots, z_n(x)$ are the zeros of $z \mapsto F(x,z)$ in the interior of $\Gamma$, it is well known (and easily verified) that
\begin{equation}
s_k(x) = \frac{1}{2\pi i} \int_{\Gamma}\:z^k\: \frac{\frac{\partial F}{\partial z} (x,z)}{F(x,z)}\: dz.
\end{equation}
Consequently, $s_k \in C(V)$ for all $k \geq 0$. It is also well known that the functions $s_k$ are connected to the functions $a_k$ via the so-called Newton identities. Therefore, the continuity of $a_k$ for $1 \leq k \leq n$ can be established by an easy induction.

Finally, for $(x,z) \in  V \times D_r(z_0)$, define $G(x,z)$ as the quotient $F(x,z)/P(x,z)$ if $P(x,z)\neq 0$ and set $G(x,z) = 1$ otherwise.
\end{proof}

Before going any further, we need some topological remarks. A good exposition of such facts can be found in \cite{MN2003}, a great deal of which we reproduce for completeness. Let $X$ be a connected topological space. A point $p \in X$ separates the distinct points $a, b \in X\setminus\{p\}$ if there exist disjoint open sets $A$ and $B$ such that $a \in A$, $b \in B$ and $X\setminus\{p\} = A \cup B$. If the point $p$ belongs to every connected closed subset of $X$ containing $a$ and $b$, we say that $p$ cuts $X$ between $a$ and $b$. If $X$ is a locally connected and connected compact Hausdorff space, then $p$ cuts $X$ between $a$ and $b$ if and only if $p$ separates the points $a$ and $b$ (cf. \cite[Theorem 3-6]{HY1988}).

If $X$ is a connected compact Hausdorff space, there exists a minimal connected closed subset, with respect to set inclusion, containing both $a$ and $b$ (cf. \cite[Theorem 2-10]{HY1988}). If $X$ is hereditarily unicoherent, such a minimal set is unique and we denote it by $E[a,b]$. Clearly, every point in $E[a,b]\setminus\{a,b\}$ cuts $X$ between $a$ and $b$. Therefore, if we assume that $X$ is also locally connected, such points also separate $a$ and $b$. We define the separation order $\preceq$ in $E[a,b]$ the following way: for distinct points $p, q \in E[a,b]$, we say that $p \prec q$ if $p = a$ or $p$ separates $a$ and $q$. Then, we write $p \preceq q$ if $p=q$ or $p \prec q$. Such choice makes $E[a,b]$ into a totally ordered space (cf. \cite[Theorem 2-21]{HY1988}). If we define the order topology in $E[a,b]$ the usual way, then it coincides with the induced topology in $E[a,b]$ (cf. \cite[Theorem 2-25]{HY1988}). Also, by \cite[Theorem 2-26]{HY1988}, every non-empty subset of $E[a,b]$ has a least upper bound, i.e., $E[a,b]$ is order-complete.

To avoid repetitions, we assume henceforth that $X$ is a connected, locally connected, hereditarily unicoherent compact Hausdorff space, unless stated otherwise. 

\begin{lemma}\label{Lemma:E[a,b]} The following two properties hold:

i-) Any connected subset of $X$ containing $a$ and $b$, must contain $E[a,b]$.

ii-) An arbitrary intersection of connected subsets of $X$ is either empty or connected.
\end{lemma}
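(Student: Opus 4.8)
The plan is to obtain both parts as immediate consequences of the separation properties of $E[a,b]$ recorded in the topological preliminaries above; no new machinery is needed.

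For (i), I would argue by contradiction. Let $C$ be a connected subset of $X$ with $a,b\in C$ and suppose there is a point $p\in E[a,b]\setminus C$. Since $a,b\in C$ we have $p\notin\{a,b\}$, so $p\in E[a,b]\setminus\{a,b\}$ and hence $p$ cuts $X$ between $a$ and $b$; as $X$ is connected, locally connected and compact Hausdorff, $p$ therefore separates $a$ and $b$, that is, there are disjoint open sets $A,B$ with $a\in A$, $b\in B$ and $X\setminus\{p\}=A\cup B$. Because $p\notin C$, the set $C$ is contained in $A\cup B$, so $C\cap A$ and $C\cap B$ are disjoint relatively open sets covering $C$; connectedness of $C$ forces one of them to be empty, which is impossible since $a\in C\cap A$ and $b\in C\cap B$. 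Hence $E[a,b]\subseteq C$.

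For (ii), let $\{C_\alpha\}_{\alpha\in I}$ be a family of connected subsets of $X$ whose intersection $D$ is non-empty. Given any $a,b\in D$, each $C_\alpha$ is a connected set containing $a$ and $b$, so part (i) yields $E[a,b]\subseteq C_\alpha$ for all $\alpha$, and therefore $E[a,b]\subseteq D$. Now assume $D$ is disconnected and write $D=U\cup V$ with $U,V$ non-empty, disjoint and relatively open in $D$; picking $a\in U$ and $b\in V$ we get $E[a,b]\subseteq D=U\cup V$, so that $E[a,b]\cap U$ and $E[a,b]\cap V$ form a non-trivial separation of $E[a,b]$ (non-trivial because $a\in E[a,b]\cap U$ and $b\in E[a,b]\cap V$), contradicting the connectedness of $E[a,b]$. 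Thus $D$ is connected.

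I do not expect any real obstacle here: everything hinges on the already-established fact that each point of $E[a,b]\setminus\{a,b\}$ separates $a$ and $b$ in $X$. The only thing requiring a little care is the routine bookkeeping with the subspace topology, namely checking that the restrictions of a separation to the smaller connected set are again relatively open and, crucially, remain non-empty.
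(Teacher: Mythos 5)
Your proof is correct and follows essentially the same route as the paper: part (i) is exactly the paper's appeal to the fact that each point of $E[a,b]\setminus\{a,b\}$ separates $a$ and $b$ (with the routine separation bookkeeping written out), and part (ii) applies (i) to every pair of points of the intersection, just as the paper does, spelling out the "now obvious" connectedness step.
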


\begin{proof}
The first part is a direct consequence of the fact that any point in the set $E[a,b]\setminus\{a,b\}$ separates $a$ and $b$. For the second part, let $\{M_\alpha\}$ be a collection of connected subsets of $X$ and suppose that $\cap_\alpha M_\alpha$ has at least two points. Given any pair of distinct points $a ,b \in \cap_\alpha M_\alpha$, we must have $E[a,b] \subset M_\alpha$ for all $\alpha$, whence we obtain $E[a,b] \subset \cap_\alpha M_\alpha$. The connectedness of $\cap_\alpha M_\alpha$ is now obvious.
\end{proof}

The above lemma will be used very often later.

\begin{lemma} \label{Lemma:Extension}
Let $D \subset X$ be connected and $x^* \in \overline{D} \setminus D$. Suppose that the function $F \in \HCX$ is nondegenerate at $x^*$ and consider $w \in C(D)$ such that $F(x,w(x)) = 0$ for all $x \in D$. Then, there exists the limit
\begin{equation}
\lim_{x \to x^*} w(x),\quad x \in D,
\end{equation}
in the Riemann sphere. 
\end{lemma}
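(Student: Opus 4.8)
The plan is to argue by contradiction: suppose the limit does not exist in the Riemann sphere $\CC \cup \{\infty\}$. Since $\overline{D}\setminus D \ni x^*$ and the Riemann sphere is compact, the cluster set
\[
C(w,x^*) = \bigcap_{V \ni x^*} \overline{\{w(x) : x \in D \cap V\}}
\]
(intersection over neighborhoods $V$ of $x^*$) is a nonempty compact subset of the sphere; the limit fails to exist precisely when $C(w,x^*)$ contains at least two points. I would first dispose of the possibility that $\infty$ lies in a cluster set that also contains a finite point, and then handle the case of two distinct finite cluster values; in fact it is cleaner to observe that the cluster set, being an intersection of connected sets (the sets $\overline{w(D\cap V)}$ are connected since $D\cap V$ need not be connected — so here one must be a little careful and instead use that $w(D\cap V)$ is connected when $D \cap V$ is connected, which can be arranged by local connectedness of $X$ and replacing $D$ by the component of $x^*$ in $D \cap V$), is itself a continuum in the sphere.

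The heart of the matter is the finite case. Assume $C(w,x^*) \subset \CC$ contains two distinct points; being a continuum it then contains some nondegenerate arc, hence infinitely many points, and in particular it contains a point $\zeta$ with $F(x^*,\zeta) = 0$ together with nearby points that are \emph{not} zeros of $z \mapsto F(x^*,z)$ --- unless $z \mapsto F(x^*,z)$ vanishes identically on an open set, but $F(x^*,\cdot)$ is a nonconstant entire function (by nondegeneracy at $x^*$), so its zero set is discrete. So pick any $\zeta \in C(w,x^*)$ that is \emph{not} a root of $F(x^*,\cdot)$; then $F(x^*,\zeta) \neq 0$, and by continuity of $F$ on $X \times \CC$ there are a neighborhood $V$ of $x^*$ and $\delta > 0$ with $|F(x,z)| > 0$ for all $x \in V \cap \overline{D}$ and $|z - \zeta| \le \delta$. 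But $\zeta \in C(w,x^*)$ forces the existence of points $x \in D \cap V$ with $|w(x) - \zeta| < \delta$, and then $F(x,w(x)) \neq 0$, contradicting $F(x,w(x)) = 0$ on $D$. Thus $C(w,x^*)$ cannot contain any non-root of $F(x^*,\cdot)$, so $C(w,x^*)$ is contained in the discrete (hence, by compactness, finite) zero set of $F(x^*,\cdot)$; being a continuum it is therefore a single point, say $z^*$, and $w(x) \to z^*$ as $x \to x^*$ in $D$.

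It remains to rule out the case $\infty \in C(w,x^*)$ when $C(w,x^*)$ also contains a finite point. The same continuum argument applies: a continuum in the sphere through $\infty$ and through some finite point meets every circle $|z| = R$ for $R$ large; in particular it contains a finite non-root $\zeta$ of $F(x^*,\cdot)$ (since the finite roots are discrete, a whole arc cannot avoid all non-roots), and the previous paragraph gives a contradiction. Hence either $C(w,x^*) = \{\infty\}$, in which case $w(x) \to \infty$, or $C(w,x^*) = \{z^*\}$ for a finite root $z^*$ of $F(x^*,\cdot)$; in both cases the limit exists in the Riemann sphere.

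The main obstacle I anticipate is the bookkeeping needed to guarantee that the relevant cluster sets are \emph{connected}: $D \cap V$ need not be connected even when $D$ is, so $w(D\cap V)$ need not be connected. To fix this one replaces $D$, in a neighborhood of $x^*$, by the connected component $D_V$ of $x^*$'s "approach region'' --- more precisely, one uses local connectedness of $X$ to choose a neighborhood basis at $x^*$ of connected open sets $V$, intersects with $D$, passes to the component $C_V$ of that intersection whose closure still contains $x^*$ (such a component exists, since $x^* \in \overline{D}$ and $V$ is connected and open), and works with the family $\{C_V\}$, which is nested (or can be made nested) and consists of connected sets accumulating at $x^*$. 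Then $\overline{w(C_V)}$ is a nested family of continua in the sphere, their intersection is a continuum, and it controls the behavior of $w$ along a cofinal family of neighborhoods of $x^*$; a separate short argument (again using nondegeneracy and the discreteness of the zero set) shows that no approach to $x^*$ within $D$ can have a cluster value outside this intersection, so the full cluster set $C(w,x^*)$ equals this continuum. Modulo this topological care, the rest is the elementary Rouché/continuity argument sketched above, and indeed Lemma~\ref{Lemma:Algebrization} can be invoked near a finite root to make the local picture completely explicit if one prefers to avoid bare-hands cluster-set manipulations.
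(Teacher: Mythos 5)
Your overall architecture is the same as the paper's: form the cluster set $\bigcap_V \overline{w(D\cap V)}$ in $\widehat{\CC}=\CC\cup\{\infty\}$, show it is a nonempty compact connected set, observe that every finite cluster value is a zero of the nonconstant entire function $z\mapsto F(x^*,z)$, and conclude it is a singleton. The genuine gap is at the connectedness step, which is precisely where the standing hypotheses on $X$ must enter. The paper takes a local base of \emph{connected} open neighborhoods $U_\alpha$ of $x^*$ (local connectedness) and then gets connectedness of $D\cap U_\alpha$ immediately from hereditary unicoherence via Lemma \ref{Lemma:E[a,b]} (an intersection of connected subsets of $X$ is empty or connected). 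You never use hereditary unicoherence; your substitute is to pass to a component $C_V$ of $D\cap V$ whose closure contains $x^*$ and then to assert, deferring to an unspecified ``separate short argument,'' that the full cluster set equals $\bigcap_V\overline{w(C_V)}$. Both steps are unjustified: the existence of such a component does not follow from $x^*\in\overline{D}$ and $V$ connected open (comb-type configurations give $D\cap V$ with infinitely many components, none of whose closures contains $x^*$), and, more seriously, no argument avoiding hereditary unicoherence can close the final step, because the lemma is false without that hypothesis. Concretely, take $X=S^1\subset\CC$ (connected, locally connected, compact Hausdorff, not hereditarily unicoherent), $F(x,z)=z^2-x$, $D=S^1\setminus\{1\}$, $x^*=1$, and $w(e^{i\theta})=e^{i\theta/2}$ for $0<\theta<2\pi$: then $F$ is nowhere degenerate and $F(x,w(x))=0$ on $D$, yet the cluster set of $w$ at $x^*$ is $\{1,-1\}$. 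In this example each one-sided family $\{C_V\}$ yields a singleton $\bigcap_V\overline{w(C_V)}$ while the full cluster set has two points, so nondegeneracy and discreteness of zeros alone cannot supply your deferred argument; hereditary unicoherence is exactly what rules this out, and Lemma \ref{Lemma:E[a,b]} makes your whole detour unnecessary.

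Two smaller inaccuracies. First, a continuum need not contain an arc (the pseudo-arc), but this is harmless: all you need is that a connected subset of $\CC$ with at least two points cannot be contained in the discrete zero set of $F(x^*,\cdot)$ (the paper instead counts: such a set would be uncountable while the zero set is countable). Second, connectedness of an intersection of continua in $\widehat{\CC}$ is not automatic for an arbitrary family; it requires the family to be directed, i.e.\ a filterbase, which holds for $\bigl\{\overline{w(D\cap U_\alpha)}\bigr\}$ because the $U_\alpha$ form a local base and which the paper verifies by a compactness argument, whereas for your family $\{C_V\}$ the claim ``nested (or can be made nested)'' would itself need proof. The remainder of your argument (finite cluster values are zeros of $F(x^*,\cdot)$ by continuity of $F$ on $X\times\CC$; a connected set inside a countable or discrete set is a point; hence the limit exists) matches the paper and is fine.
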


\begin{proof}
Denote the Riemann sphere by $\widehat{\CC} = \CC \cup \{\infty\}$ and let $\{U_\alpha\}_{\alpha \in I}$ be a local basis at $x^*$ consisting of connected open sets. It is readily seen that the family $\mathcal{F} = \bigl\{\overline{w(D \cap U_\alpha)}\: :\:\alpha \in I\bigr\}$ is a filterbase in $\widehat{\CC}$. Since the latter is compact, $\mathcal{F}$ has at least one accumulation point, i.e.,
\begin{equation}
\mathcal{F}_{ac} = \bigcap_{\alpha \in I} \overline{w(D \cap U_\alpha)} \neq \emptyset.
\end{equation}

Next, by Lemma \ref{Lemma:E[a,b]}, it is easy to see that $D \cap U_\alpha$ is connected for all $\alpha \in I$ and the continuity of $w$ implies that $\overline{w(D \cap U_\alpha)}$ is also connected. Suppose that $\mathcal{F}_{ac}$ is not connected, i.e., there exist disjoint open sets $A, B \subset \widehat{\CC}$ such that $\mathcal{F}_{ac} \subset A \cup B$, $\mathcal{F}_{ac} \cap A \neq \emptyset$ and $\mathcal{F}_{ac} \cap B \neq \emptyset$. Note that we can write
\begin{equation}
\bigcap_{\alpha \in I} \overline{w(D \cap U_\alpha)}\ \cap \ \bigl(\widehat{\CC}\setminus(A \cup B)\bigr) = \mathcal{F}_{ac} \ \cap \ \bigl(\widehat{\CC}\setminus(A \cup B)\bigr) = \emptyset
\end{equation}
and accordingly, the compactness of $\widehat{\CC}$ implies the existence of a finite set of indices $\alpha_1, \ldots,\alpha_n \in I$ such that $\overline{w(D \cap U_{\alpha_1})} \cap \ldots \cap \overline{w(D \cap U_{\alpha_n})} \cap \bigl(\widehat{\CC}\setminus(A \cup B)\bigr) = \emptyset$. Since $\mathcal{F}$ is a filterbase, we can find $\beta \in I$ such that $\overline{w(D \cap U_\beta)} \subset \overline{w(D \cap U_{\alpha_1})} \cap \ldots \cap \overline{w(D \cap U_{\alpha_n})}$ and thus, $\overline{w(D \cap U_\beta)} \subset A \cup B$. However, as $\mathcal{F}_{ac} \subset \overline{w(D \cap U_\beta)}$, we must have $\overline{w(D \cap U_\beta)} \cap A \neq \emptyset$ and $\overline{w(D \cap U_\beta)} \cap B \neq \emptyset$. Hence, $\overline{w(D \cap U_\beta)}$ cannot be connected, which is absurd.

We assume, towards contradiction that $\mathcal{F}_{ac}$ contains at least two points. Let $\epsilon>0$ be arbitrary and let $z^* \in \mathcal{F}_{ac}$, $z^* \neq \infty$. Pick $\delta > 0$ and a neighborhood $U_\gamma$ of $x^*$ with $\gamma\in I$ such that $|F(x,z) - F(x^*,z^*)|<\epsilon$ whenever $x \in U_\gamma$ and $|z-z^*|<\delta$. Since $z^* \in \overline{w(D \cap U_\gamma)}$, there exists $x_\gamma \in D \cap U_\gamma$ such that $|w(x_\gamma) - z^*|<\delta$, whence we obtain that $|F(x_\gamma,w(x_\gamma)) - F(x^*,z^*)|<\epsilon$. Given that $F(x_\gamma,w(x_\gamma)) = 0$, we must have $|F(x^*,z^*)| < \epsilon$. Since $\epsilon$ is arbitrary, $F(x^*,z^*) =0$. Therefore, any finite point of $\mathcal{F}_{ac}$ is a root of $z \mapsto F(x^*,z)$. Since $F$ is nondegenerate at $x^*$, $z \mapsto F(x^*,z)$ is a non-constant entire function and therefore has at most countably many roots. As a result, $\mathcal{F}_{ac}$ is at most countable. Since it is also a non-empty,
connected subset of $\widehat{\CC}$, we get our desired contradiction and conclude that $\mathcal{F}_{ac}$ reduces to a single point. Then, it is straightforward to see that such point must be the limit of $w(x)$ as $x \to x^*$. 
\end{proof}

We now prove the main result of the paper.

\begin{theorem} \label{Thm:Main}
Let $F \in \HCX$ be  a nowhere degenerate function, having no asymptotic roots and assume that there exist $x_0 \in X$ and $z_0 \in \CC$ such that $F(x_0, z_0) = 0$. Then there exists $w \in C(X)$ such that $w(x_0) = z_0$ and $F(x,w(x))=0$ for all $x \in X$.
\end{theorem}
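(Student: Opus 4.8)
The plan is to construct a maximal \emph{partial root} by Zorn's lemma and then to argue that maximality forces it to be defined on all of $X$. Let $\mathcal{P}$ be the set of pairs $(D,w)$ with $D\subseteq X$ closed and connected, $x_0\in D$, $w\in C(D)$, $w(x_0)=z_0$, and $F(x,w(x))=0$ for all $x\in D$, partially ordered by $(D_1,w_1)\preceq(D_2,w_2)$ iff $D_1\subseteq D_2$ and $w_2|_{D_1}=w_1$; it contains $(\{x_0\},z_0)$, hence is nonempty. For a chain $\{(D_\alpha,w_\alpha)\}$ put $D'=\bigcup_\alpha D_\alpha$ (a connected set), let $\Gamma$ be the union of the graphs of the $w_\alpha$, and let $\widehat{\Gamma}$ be its closure in the compact space $X\times\widehat{\CC}$; I would show that $\widehat{\Gamma}$ is the graph of a continuous function $\widetilde{w}$ on $D'':=\overline{D'}$. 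The decisive point is that each fibre $\widehat{\Gamma}\cap(\{x^*\}\times\widehat{\CC})$ is connected: were it disconnected, a tube-lemma compactness argument would confine $\Gamma\cap(U\times\widehat{\CC})$ to one side of the disconnection for a small connected open $U\ni x^*$ (here one uses that the graphs are nested and that each $\Gamma_{w_\alpha}\cap(U\times\widehat{\CC})$ is connected, by Lemma~\ref{Lemma:E[a,b]}), forcing the whole fibre onto that side, a contradiction. Since $F$ is nowhere degenerate, this fibre meets $\CC$ only in the discrete zero set of $z\mapsto F(x^*,z)$, so a connected fibre is a single point; a function with closed graph into a compact Hausdorff space being continuous, $\widetilde{w}$ is continuous. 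Finally, if $\widetilde{w}(x^*)=\infty$ for some $x^*\in D''\setminus D'$, then $\widetilde{w}|_{D'}$ is an asymptotic root along the connected set $D'$ (cf.\ Lemma~\ref{Lemma:Extension} and Definition~\ref{Def_AsymptoticZero}), which is excluded by hypothesis; hence $\widetilde{w}$ is $\CC$-valued and $(D'',\widetilde{w})\in\mathcal{P}$ dominates the chain. Zorn's lemma then yields a maximal element $(D^*,w^*)$.

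It remains to show $D^*=X$. Suppose not, and pick $b\in X\setminus D^*$; since $x_0\neq b$, the minimal connected closed set $L:=E[x_0,b]$ is a nondegenerate arc. As recalled before Lemma~\ref{Lemma:E[a,b]}, the separation order on $L$ with least element $x_0$ makes $L$ a compact connected linearly ordered space, hence densely ordered and order-complete, hence locally connected; and $L$ is hereditarily unicoherent by Lemma~\ref{Lemma:E[a,b]}. Now $L\cap D^*$ is closed and connected, contains $x_0$ but not $b$, so it is an initial segment $[x_0,c]$ with $c\prec b$; put $z_c:=w^*(c)$.

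The core of the proof is to push $w^*$ a little past $c$ along $L$. Since $F$ is nondegenerate at $c$, the entire function $z\mapsto F(c,z)$ has a zero of some finite multiplicity $m$ at $z_c$, so Lemma~\ref{Lemma:Algebrization} gives a disk $D_r(z_c)$ and an open neighbourhood $W$ of $c$ with $F=P\,G$ on $W\times D_r(z_c)$, where $P$ is monic of degree $m$, $P(c,\cdot)=(\cdot-z_c)^m$, and $G$ never vanishes; shrinking $W$ (Rouch\'e, as in the proof of Lemma~\ref{Lemma:Algebrization}) I may assume every zero of $P(x,\cdot)$ lies in $D_r(z_c)$ for all $x\in W$. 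Because $L$ is densely ordered and $c\prec b$, it contains a nondegenerate closed sub-arc $A:=[c,e]\subseteq W$ with $c\prec e\prec b$, and then $A\cap D^*=\{c\}$. The space $A$ is compact, connected, locally connected, hereditarily unicoherent and Hausdorff, so by the Miura--Niijima theorem $C(A)$ is algebraically closed; hence the monic polynomial $P$, viewed over $C(A)$, has a root $v\in C(A)$. For each $x\in A$ the value $v(x)$ is a zero of $P(x,\cdot)$, so $v(x)\in D_r(z_c)$ and $F(x,v(x))=P(x,v(x))\,G(x,v(x))=0$; and $v(c)=z_c$, the unique zero of $P(c,\cdot)$, so $v$ agrees with $w^*$ at the only common point $c$. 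By the pasting lemma ($D^*$ and $A$ are closed in $X$), $w^*$ and $v$ glue to a continuous function on the closed connected set $D^*\cup A$, giving a member of $\mathcal{P}$ that strictly dominates $(D^*,w^*)$ --- contradicting maximality. Therefore $D^*=X$, and $w^*$ is the required root.

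The step I expect to be the real obstacle is the treatment of chains: once $X$ is not assumed first countable, the naive pointwise glued function on $\bigcup_\alpha D_\alpha$ need not be visibly continuous, and the graph-closure device is exactly what repairs this. It is there, and not in the local and essentially algebraic extension step, that nowhere-degeneracy and the absence of asymptotic roots genuinely enter.
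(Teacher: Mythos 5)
Your proposal is correct and follows essentially the same route as the paper: Zorn's lemma on connected partial roots through $(x_0,z_0)$, continuity across chains via the connectedness-plus-countability argument for accumulation sets in $\widehat{\CC}$ (where nowhere-degeneracy enters), the no-asymptotic-root hypothesis to exclude the value $\infty$, and extension past the supremum of $E[x_0,b]\cap D^*$ via Lemma \ref{Lemma:Algebrization} together with a continuous root of the monic factor along the arc. Your only departures are organizational --- you build closedness into the poset and absorb the paper's Lemma \ref{Lemma:Extension} step into the chain bound, recast the paper's filterbase argument as a graph-closure argument, and obtain the root of $P$ on the arc from the Miura--Niijima theorem instead of the Deckard--Pearcy theorem cited in the paper --- and all of these are sound.
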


\begin{proof}
Let $\DD$ be the set of pairs $(D,w)$, where $D \subset X$ is a connected subset containing $x_0$, $w \in C(D)$, $w(x_0) = z_0$ and $F(x,w(x))=0$ for all $x \in X$. The family $\DD$ is not empty, as it contains the pair $(D_0,w_0)$, where $D_0=\{x_0\}$ and $w_0:D_0 \to \CC$ is defined by $w_0(x_0)=z_0$. We define a partial order in $\DD$ as follows: we write $(D_1,w_1) \leq (D_2,w_2)$ if $D_1 \subset D_2$ and $w_2|_{D_1} = w_1$.

Let $\{(D_\alpha,w_\alpha)\}_{\alpha \in I}$ be a chain in $\DD$. Set $\widetilde{D} = \bigcup_\alpha D_\alpha$ and define $\widetilde{w}:\widetilde{D} \to \CC$ by $\widetilde{w}(x) = w_\alpha(x)$, if $x \in D_\alpha$. It is obvious that $\widetilde{D}$ is a connected subset of $X$ containing $x_0$ and $\widetilde{w}$ is a well defined function such that $\widetilde{w}(x_0) = z_0$ and $F(x,\widetilde{w}(x))=0$ for all $x \in \widetilde{D}$. 

We subsequently prove that $\widetilde{w}$ is continuous on $\widetilde{D}$. Let $\widetilde{x} \in \widetilde{D}$ be arbitrary and consider a local basis $\{U_\beta\}_{\beta \in J}$ at $\widetilde{x}$ consisting of connected open sets. The family $\mathcal{F} = \bigl\{\overline{\widetilde{w}(\widetilde{D} \cap U_\beta)}\: :\:\beta \in J\bigr\}$ may be regarded as a filterbase in $\widehat{\CC}$. If we denote its set of accumulation points by $\mathcal{F}_{ac} = \bigcap_{\beta} \overline{\widetilde{w}(\widetilde{D} \cap U_\beta)}$, it is obvious that $\widetilde{w}(\widetilde{x}) \in \mathcal{F}_{ac}$, since $\widetilde{x} \in \widetilde{D} \cap U_\beta$ for all $\beta \in J$.

We show that $\widetilde{w}(\widetilde{D} \cap U_\beta)$ is connected for all $\beta \in J$. Suppose on the contrary that there exist two disjoint open sets $A, B \subset \widehat{\CC}$ such that $\widetilde{w}(\widetilde{D} \cap U_\beta) \subset A\cup B$, $\widetilde{w}(\widetilde{D} \cap U_\beta) \cap A \neq \emptyset$ and $\widetilde{w}(\widetilde{D} \cap U_\beta) \cap B \neq \emptyset$. Pick $\xi_A \in \widetilde{w}(\widetilde{D} \cap U_\beta) \cap A$ and $\xi_B \in \widetilde{w}(\widetilde{D} \cap U_\beta) \cap B$. Then, we can find $x_A, x_B \in \widetilde{D} \cap U_\beta$ such that $\widetilde{w}(x_A) = \xi_A$ and $\widetilde{w}(x_B) = \xi_B$. Note that
\begin{equation}
x_A \in \Biggl(\bigcup_{\alpha \in I} D_\alpha \Biggr) \cap U_\beta = \bigcup_{\alpha \in I} (D_\alpha \cap U_\beta)
\end{equation}
and accordingly, there exists an index $\alpha_1 \in I$ such that $x_A \in D_{\alpha_1} \cap U_\beta$. Similarly, there exists $\alpha_2 \in I$ such that $x_B \in D_{\alpha_2} \cap U_\beta$. Since $\{(D_\alpha,w_\alpha)\}_{\alpha \in I}$ is a chain, we may assume $D_{\alpha_1} \subset D_{\alpha_2}$. In that case, $x_A, x_B \in D_{\alpha_2} \cap U_\beta$, whence we derive that $E[x_A,x_B] \subset D_{\alpha_2} \cap U_\beta$, by an application of Lemma \ref{Lemma:E[a,b]}. Observe that $\widetilde{w}(E[x_A,x_B]) = w_{\alpha_2}(E[x_A,x_B])$ is connected; however, $\widetilde{w}(E[x_A,x_B]) \subset A \cup B$, $\xi_A \in \widetilde{w}(E[x_A,x_B]) \cap A$ and $\xi_B \in \widetilde{w}(E[x_A,x_B]) \cap B$, which is clearly impossible. We have reached a contradiction, which proves the connectedness of $\widetilde{w}(\widetilde{D} \cap U_\beta)$ for all $\beta \in J$. Therefore,  $\overline{\widetilde{w}(\widetilde{D} \cap U_\beta)}$ is also connected and an analogous argument to that of Lemma \ref{Lemma:Extension} shows that $\mathcal{F}_{ac}$ must be connected as well.

Also, by reviewing the techniques introduced in the proof of Lemma \ref{Lemma:Extension}, it is straightforward to see that any finite point of $\mathcal{F}_{ac}$ is a zero of the non-constant entire function $z \mapsto F(\widetilde{x},z)$, which shows that $\mathcal{F}_{ac}$ is at most countable. Since it is also non-empty and connected, it must reduce to a single point, which in this case is obviously $\widetilde{w}(\widetilde{x})$. Then, it is easy to conclude that $\widetilde{w}$ is continuous at $\widetilde{x}$.

A standard application of Zorn's Lemma shows that $\DD$ has a maximal element, which we denote by $(D^*,w^*)$. We wish to prove that $D^*=X$.

We first show that $D^*$ is closed. Conversely, suppose that there exists $x^* \in \overline{D^*}\setminus D^*$. A direct application of Lemma \ref{Lemma:Extension} shows that $w^*(x)$ has a limit in the Riemman sphere as $x \to x^*$ ($x \in D^*$), which cannot be infinity by the assumption on the non-existence of asymptotic roots for $F$. Therefore, $w^*$ has a continuous extension $\widetilde{w}^*$ to $D^* \cup \{x^*\}$. Note that the map $x \mapsto F(x,\widetilde{w}^*(x))$ vanishes on $D^*$ and is continuous on the connected set $D^* \cup \{x^*\}$, whence we deduce that $F(x,\widetilde{w}^*(x)) = 0$ for all $x \in D^* \cup \{x^*\}$. Consequently, we have proven that $(D^*,w^*) < (D^* \cup \{x^*\},\widetilde{w}^*)$, which contradicts the maximality of $(D^*,w^*)$.

Finally, suppose that $D^* \neq X$, i.e., there exists $y \in X\setminus D^*$. Since, as noted in page 4, $E[x_0,y]$ is order-complete with respect to the separation order, there exists a least upper bound $m$ of $E[x_0,y] \cap D^*$. Since $D^*$ is closed, it is easy to see that $m \in D^*$; moreover, we have the inclusions $E[x_0,m] \subset D^*$ (by Lemma \ref{Lemma:E[a,b]}) and $E[m,y]\setminus\{m\} \subset X \setminus D^*$. By taking into account that $F(m,w^*(m))=0$ and $F$ is nowhere degenerate, we can use Lemma \ref{Lemma:Algebrization} to find an open disk $D_r(w^*(m))$ and a neighborhood $V$ of $m$ such that $F(x,z)=P(x,z)\:G(x,z)$ for all $(x,z) \in V \times D_r(w^*(m))$, where $P$ is a monic polynomial with coefficients in $C(V)$ and $G$ is free of zeros in $V \times D_r(w^*(m))$. Without loss of generality, we may assume that $V$ is connected and then, we select $y_1 \in E[m,y]\setminus\{m\}$ such that $E[m,y_1] \subset V$. Since $E[m,y_1]$ is a totally ordered and order-complete space, we can find $w_1 \in C(E[m,y_1])$ such that $P(x,w_1(x))=0$ for all $x \in E[m,y_1]$, by \cite[Theorem 3]{DP1964}. Also, given that $P(m,z)$ is a power of $(z-w^*(m))$ (see Lemma \ref{Lemma:Algebrization}), we must have $w_1(m) = w^*(m)$. By the continuity of $w_1$, we can pick $\bar{y} \in E[m,y_1]\setminus\{m\}$ such that $w_1(E[m,\bar{y}]) \subset D_r(w^*(m))$. Now, we write $\widetilde{D} = D^* \cup E[m,\bar{y}]$ and consider the function $\widetilde{w}:\widetilde{D} \to \CC$ defined by
\begin{equation}
\widetilde{w}(x) = \begin{cases}
w^*(x), & x \in D^*;\\
w_1(x), & x \in E[m,\bar{y}].
\end{cases}
\end{equation}
It is easy to see that $D^*\setminus\{m\}$ and $E[m,\bar{y}]\setminus\{m\}$ are both open in $\widetilde{D}$, whence it may be inferred that $\widetilde{w}$ is continuous on $\widetilde{D}$. We prove that $F(x, \widetilde{w}(x))=0$ for all $x \in \widetilde{D}$. The result is obvious for $x \in D^*$. On the other hand, if $x \in E[m,\bar{y}]$, then it is straightforward to see that  $\widetilde{w}(x) \in D_r(w^*(m))$ (recall the choice of $\bar{y}$) and consequently, we have $F(x,\widetilde{w}(x)) = P(x,\widetilde{w}(x))\:G(x,\widetilde{w}(x)) = 0$. Thus, we have shown that $(D^*,w^*) < (\widetilde{D},\widetilde{w})$, which contradicts the maximality of $(D^*,w^*)$. The proof is now complete.
\end{proof}

\begin{remark}
Note that we have assumed that $X$ is connected in the preceding theorem, while Miura and Niijima \cite{MN2003} have shown that such restriction is unnecessary for $C(X)$ to be algebraically closed. Can we drop the connectedness hypothesis in Theorem \ref{Thm:Main}? Not completely. The connected components of a locally connected space are open. Hence, if we can find a root of $F$ in $C(X_\lambda)$ for every connected component $X_\lambda$ of $X$,  we easily conclude that $F$ has a root in $\CX$. If $F$ is nowhere degenerate and has no asymptotic roots, this can be done by Theorem \ref{Thm:Main}, \emph{provided that $F(x_0,z_0) = 0$ for some $x_0 \in X_\lambda$ and $z_0 \in \CC$}. Such condition is not always met for arbitrary functions $F \in \HCX$ (e.g., take $F$ to be a suitable exponential function in one connected component of $X$). However, if $F$ is a non-constant monic polynomial, it is trivially fulfilled and we may recover the results from \cite{MN2003}.
\end{remark} 

\begin{remark}
The restrictions imposed to $F$ in the hypotheses of Theorem \ref{Thm:Main} are not necessary for the existence of roots. For example, consider the algebra $C([0,1])$ and define $F_1(x,z) = \exp(xz) - 1$. It is clearly degenerate at $x_0=0$. Moreover, the function $\omega:(0,1] \to \CC$ defined by $\omega(x) = 2\pi i x^{-1}$ is an asymptotic root of $F_1$. However, it obviously has the zero function as a root.
\end{remark}

To finish this paper, we introduce two examples showing how the presence of degeneracy and asymptotic roots can interfere with the existence of roots.

\begin{example} Recall that $F$ is degenerate at $x_0 \in X$ if $z \mapsto F(x_0,z)$ is a constant map. Obviously, if it is not the zero map, $F$ cannot have any root. On the other hand, let $X=[0,1]$ and write $h(x)=\sin(1/x)$. Consider the function
\begin{equation}
F(x,z) = \begin{cases}
x\bigl(\exp z - \exp {h(x)}\bigr), & 0 < x \leq 1; \\
0, & x = 0.
\end{cases}
\end{equation}

It can be easily verified that $F\in \HCX$. Also, note that $F$ is degenerate at $x_0=0$ and $z \mapsto F(0,z)$ is the zero function. Suppose that $w \in \CX$ is a root of $F$. Then, $F(x,w(x))=0$ for all $x\in[0,1]$ implies that $w(x)=h(x)+2k(x)\pi i$ for $x \in (0,1]$, where $k(x) \in \mathbb{Z}$. By continuity, $k(x)$ must be constant, which yields $w(x)=\sin(1/x)+2k\pi i$ for all $x \in (0,1]$. Since this function does not have a continuous extension to the interval $[0,1]$, we have reached a contradiction. Moreover, although the function $g(x) = \sin(1/x)+2k\pi i$ satisfies $F(x,g(x))=0$ for all $x \in (0,1]$, it does not have a limit in the Riemann sphere as $x \to 0$. Therefore, the hypothesis of nondegeneracy is also essential for Lemma \ref{Lemma:Extension}. 
\end{example}

\begin{example}
Let $X=[0,1]$. Consider the function $\varphi(z)=z\exp(-z)$ and any continuous curve $\omega:[0,1)\to \CC$ such that $\omega(0)=0$, $\omega(x)=(1-x)^{-1}$ for $1/2\leq x<1$ and its image avoids the point $1$ (the zero of $\varphi '$). Define the function
\begin{equation}
F(x,z) = \begin{cases}
\varphi(z) - \varphi(\omega(x)), & 0 \leq x < 1; \\
\varphi(z), & x = 1.
\end{cases}
\end{equation}

It can be easily seen that $F \in \HCX$ and is nowhere degenerate; however, $\omega$ is an asymptotic root of $F$. Suppose that $w\in \CX$ is a root of $F$. Then, we must have $\varphi(w(x)) = \varphi(\omega(x))$ for all $x \in [0,1)$. We prove that the set $A = \{x\in [0,1)\:|\: w(x)=\omega(x)\}$ is open and closed in $[0,1)$. The second assertion is obvious from the continuity of $w-\omega$. On the other hand, if $w(x_0) = \omega(x_0) = z_0$, we have that $\varphi$ is locally injective at $z_0$ (since $\varphi'(\omega(x)) \neq 0$ for all $x \in [0,1)$). Since $\varphi(w(x)) = \varphi(\omega(x))$, the continuity of $w$ and $\omega$ implies that such functions must coincide in a neighborhood of $x_0$, proving that $A$ is open in $[0,1)$. Next, note that $0\in A$. Since $[0,1)$ is connected, we conclude that $A=[0,1)$. However, this means that $w(x) \to \infty$ as $x \to 1$, which is clearly absurd.
\end{example}

\bibliographystyle{amsplain}

\end{document}